\newtheorem{theo}{Theorem}
\newtheorem{lemm}[theo]{Lemma}
\newtheorem{prop}[theo]{Proposition}
\newtheorem{cor}[theo]{Corollary}
\title{On special values at integers of $L$-functions of Jacobi theta products of weight $3$}
\author{Ryojun Ito\thanks{
Department of Mathematics and Informatics, Graduate School of Science, Chiba University, 
Yayoicho 1-33, Inage, Chiba, 263-8522 Japan. E-mail: afua9032@chiba-u.jp,  
2010 Mathematics Subject Classification: 11F27, 11F67, 33C20, 33C99       
keywords: theta series, $L$-values for theta products, 
generalized hypergeometric functions, Kamp{\'e} de F{\'e}riet hypergeometric functions. }}
\date{}
\begin{document}

\maketitle

\begin{abstract}
In this paper, we consider $L$-functions of modular forms of weight 3, which are products of the Jacobi theta series, and 
express their special values at $s=3$, $4$ in terms of special values of Kamp{\'e} de F{\'e}riet hypergeometric functions. 
Moreover, via $L$-values, we give some relations between special values of Kamp{\'e} de F{\'e}riet hypergeometric functions and 
generalized hypergeometric functions.  
\end{abstract}

\section{Introduction and Main Results}
Let $f$ be a modular form of weight $k$ with $q$-expansion 
$f(q) =  \sum_{n=0}^{\infty} a_{n}q^{n}$ ($q = e^{2\pi i \tau}$, ${\rm Im}(\tau)>0$).
Then its $L$-function $L(f, s) = \sum_{n=1}^{\infty} a_{n} / n^{s}$ converges absolutely on ${\rm Re}(s) > k+1$ 
(${\rm Re}(s) > k/2+1$ if $f$ is a cusp form). 
When the Fricke involution image $f^{\#}$ of $f$ is also a modular form, then $L(f,s)$ is meromorphically continued to 
the whole complex plane with a possible simple pole at $s=k$, and is entire when $f^{\#}(0)=0$ (see \cite[Theorem 3.2]{shimura}). 
In this paper, we consider the case when $f(q)$ is a product of the Jacobi theta series
\begin{align*}
\theta_{2}(q) := \sum_{n\in\mathbb{Z}}q^{\left( n + \frac{1}{2} \right)^{2}}, \hspace{2mm}
\theta_{3}(q) := \sum_{n \in\mathbb{Z}} q^{n^{2}}, \hspace{2mm}
\theta_{4}(q) := \sum_{n \in\mathbb{Z}} (-1)^{n}q^{n^{2}},
\end{align*}
which are modular forms of weight $1/2$, and satisfies the condition $f^{\#}(0)=0$.

In \cite{rz}, by an analytic method, Rogers and Zudilin expressed $L(f, 2)$ for some theta products $f(q)$ of weight 2
in terms of special values of generalized hypergeometric functions 
\begin{align*}
{_{p+1}F_{p}}\left[ 
\left.
\begin{matrix}
a_{1},a_{2}, \dots , a_{p+1} \\
b_{1}, b_{2}, \dots , b_{p} 
\end{matrix}
\right| z
\right]
:=
\sum_{n=0}^{\infty} \frac{(a_{1})_{n} \cdots (a_{p+1})_{n}}{(b_{1})_{n} \cdots (b_{p})_{n}} \frac{z^{n}}{(1)_{n}},
\end{align*}
where $(a)_{n} := \Gamma(a+n) / \Gamma(a)$ denotes the Pochhammer symbol.
Other known results of hypergeometric expressions of $L$-values are the following. 
\begin{enumerate}
\item Otsubo \cite{otsubo} expressed $L(f, 2)$ for some theta products $f(q)$ of weight $2$ 
in terms of ${_{3}F_{2}}(1)$ via regulators.

\item Rogers \cite{rog}, Rogers-Zudilin \cite{rz}, Zudilin \cite{zud} and 
the author \cite{ito1} expressed $L(f, 2)$ for some theta products $f(q)$ of weight $2$ in terms of ${_{3}F_{2}}(1)$ by using
the Rogers-Zudilin method. Furthermore, Zudilin \cite{zud} expressed $L(f, 3)$ for the theta product which corresponds 
to the elliptic curve of conductor $32$ in terms of ${_{4}F_{3}}(1)$.

\item Rogers-Wan-Zucker \cite{rwz} expressed $L(f, 2)$ (resp. $L(f, 3)$, $L(f,4)$) for some quotients $f(q)$ of 
the Dedekind eta function $\eta(q) := q^{1/24}\prod_{n=1}^{\infty} (1-q^{n})$ of weight $3$ (resp. $4$, $5$) 
in terms of special values of generalized hypergeometric functions or the gamma function by an analytic method. 
The author \cite{ito2} expressed $L(f, 1)$ (hence the values at $2$ by the functional equations) for some theta products $f(q)$ 
of weight $3$ in terms of ${_{3}F_{2}}(1)$ by using the Rogers-Zudilin method. 

\item Samart \cite{samart} expressed $L(f, 3)$ for some eta quotients $f(q)$ of weight $3$ in terms of ${_{5}F_{4}}(1)$ 
via Mahler measures. 

\end{enumerate}

In this paper, we consider the following normalized Jacobi theta products of weight $3$
\begin{align*}
f(q) = \frac{1}{16}\theta_{2}^{4}(q)\theta_{4}^{2}(q), \hspace{2mm} g(q) = \frac{1}{16} \theta_{2}^{4}(q)\theta_{4}^{2}(q^{2}).
\end{align*}
We remark that $f(q)$ is an Eisenstein series twisted by some Dirichlet characters and 
$g(q)$ is the cusp form corresponding to the Kummer K3 surface 
defined by $z^{2} = x(x^{2}-1)y(y^{2}-1)$ (cf. \cite[Theorem 7.4]{topyui}).

The aim of this paper is to express $L(f, n)$ and $L(g, n)$ for $n=3,4$ 
in terms of special values of the Kamp{\'e} de F{\'e}riet hypergeometric function \cite{appkampe, sk}
\begin{align*}
&F_{C;D;D^{\prime}}^{A;B;B^{\prime}}\left[\left.
\begin{matrix}
a_{1}, \dots, a_{A}  \\
c_{1}, \dots, c_{C}
\end{matrix}
; 
\begin{matrix}
b_{1} , \dots, b_{B} \\
d_{1}, \dots, d_{D}
\end{matrix}
;
\begin{matrix}
b_{1}^{\prime}, \dots, b_{B^{\prime}}^{\prime} \\
d_{1}^{\prime}, \dots, d_{D^{\prime}}^{\prime}
\end{matrix}
\right| x, y
\right] \\
&:= 
\sum_{m,n=0}^{\infty} 
\frac{
\prod_{i=1}^{A} (a_{i})_{m+n} \prod_{i=1}^{B} (b_{i})_{m} \prod_{i=1}^{B^{\prime}} (b_{i}^{\prime})_{n} 
}{\prod_{i=1}^{C} (c_{i})_{m+n} \prod_{i=1}^{D} (d_{i})_{m} \prod_{i=1}^{D^{\prime}} (d_{i}^{\prime})_{n}
} \frac{x^{m}y^{n}}{(1)_{m}(1)_{n}},
\end{align*}
which is a two-variable generalization of generalized hypergeometric functions.

The main results are the following. 

\begin{theo}\label{val3}

\begin{enumerate}
\item 
\begin{align*}
L(f, 3) = \frac{\pi^{2}}{96} 
F_{1;1;1}^{1;2;2}\left[\left.
\begin{matrix}
2 \\
\frac{5}{2}
\end{matrix}
; 
\begin{matrix}
1, 1 \\
2
\end{matrix}
;
\begin{matrix}
\frac{1}{2}, \frac{1}{2} \\
1
\end{matrix}
\right| 1, 1
\right]. 
\end{align*}

\item 
\begin{align*}
L(g, 3) = \frac{\pi^{3}}{128}
F_{1;1;1}^{1;2;2} \left[ \left. 
\begin{matrix}
\frac{3}{2} \\
2
\end{matrix}
;
\begin{matrix}
\frac{1}{2}, 1 \\
\frac{3}{2}
\end{matrix}
;
\begin{matrix}
\frac{1}{2}, \frac{1}{2} \\
1
\end{matrix}
\right|
1,1
\right] .
\end{align*}
\end{enumerate}
\end{theo}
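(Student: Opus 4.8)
The plan is to combine the classical Hecke integral representation of the $L$-function with the Rogers--Zudilin method, as used in \cite{ito2} for the value $L(f,1)$. Write $q=e^{-t}$. Since $f^{\#}(0)=g^{\#}(0)=0$, the integral
\[
L(h,3)=\frac{1}{\Gamma(3)}\int_{0}^{\infty}h(e^{-t})\,t^{2}\,dt=\frac{1}{2}\int_{0}^{1}h(q)(\log q)^{2}\,\frac{dq}{q}
\]
converges and represents $L(h,3)$ for $h\in\{f,g\}$. Substituting the theta-product expressions — and, in the case of $g$, first using the duplication identity $\theta_{4}^{2}(q^{2})=\theta_{3}(q)\theta_{4}(q)$, so that $g(q)=\tfrac{1}{16}\theta_{2}^{4}(q)\theta_{3}(q)\theta_{4}(q)$ (equivalently $g(q)=\eta(q^{4})^{6}$) — recasts both values as integrals of the form $\int_{0}^{\infty}(\text{product of Jacobi theta series})(t)\,t^{2}\,dt$.

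Next I would invoke the theta transformation formulas $\theta_{4}(e^{-t})=\sqrt{\pi/t}\,\theta_{2}(e^{-\pi^{2}/t})$ and $\theta_{3}(e^{-t})=\sqrt{\pi/t}\,\theta_{3}(e^{-\pi^{2}/t})$, applied to the factors other than $\theta_{2}^{4}$, so that the integrand becomes a product of theta series at the \emph{reciprocal} arguments $e^{-t}$ and $e^{-\pi^{2}/t}$. After rescaling $t$ (and, for $g$, one more duplication $\theta_{2}(Q^{2})\theta_{3}(Q^{2})=\tfrac12\theta_{2}^{2}(Q)$), one is reduced to a constant multiple of $\int_{0}^{\infty}t\,\theta_{2}^{4}(e^{-t})\theta_{2}^{2}(e^{-\pi^{2}/t})\,dt$ for $f$ and of $\int_{0}^{\infty}t\,\theta_{2}^{4}(e^{-t})\theta_{2}^{2}(e^{-\pi^{2}/(2t)})\,dt$ for $g$. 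Expanding each theta power in its $q$-series — whose coefficients are, up to constants, the divisor sums $\sigma$ supported on odd integers (from $\theta_{2}^{4}$) and two-squares-type representation numbers (from the reciprocal factor) — and integrating term by term via $\int_{0}^{\infty}t\,e^{-\alpha t-\beta/t}\,dt=2(\beta/\alpha)K_{2}(2\sqrt{\alpha\beta})$ yields a sum of $K_{2}$-Bessel functions. The Rogers--Zudilin resummation — replacing $\sigma(m)$ by $\sum_{de=m}d$ and interchanging orders of summation, which collapses the Bessel sums — then produces a genuine double series in two nonnegative integer indices $m,n$.

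It remains to match that double series with the stated Kamp\'{e} de F\'{e}riet value. The coefficient pattern $(1/2)_{n}^{2}/(1)_{n}^{2}=\binom{2n}{n}^{2}16^{-n}$ is precisely the $n$-th coefficient of ${}_{2}F_{1}[\tfrac12,\tfrac12;1;\,\cdot\,]$ inherited from $\theta_{3}^{2}$; the factor $1/(m+1)$ (resp.\ $1/(2m+1)$ for $g$) is $\int_{0}^{1}s^{m}\,ds$ (resp.\ $\int_{0}^{1}s^{2m}\,ds$); and the coupling factor $(2)_{m+n}/(5/2)_{m+n}$ (resp.\ $(3/2)_{m+n}/(2)_{m+n}$) comes from a Beta integral $\int_{0}^{1}x^{m+n+c}(1-x)^{-1/2}\,dx=B\!\left(m+n+c+1,\tfrac12\right)$. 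Reassembling these pieces and tracking the $\Gamma$- and $\pi$-constants gives exactly $L(f,3)=\tfrac{\pi^{2}}{96}F^{1;2;2}_{1;1;1}[\cdots]$ and $L(g,3)=\tfrac{\pi^{3}}{128}F^{1;2;2}_{1;1;1}[\cdots]$ with the parameters in the statement, the different powers of $\pi$ tracing back to the extra $2$ in the reciprocal argument for $g$.

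The main obstacle should be the Rogers--Zudilin resummation together with its analytic justification. Since the relevant theta products are supported on odd integers and on half-integers, the divisor manipulation must be carried out over the correct arithmetic progressions, and every interchange of summation with integration (and every rearrangement of Bessel series) needs to be checked; moreover the double series defining $F^{1;2;2}_{1;1;1}$ converges only conditionally at $(x,y)=(1,1)$, so the final identification should be obtained through an Abel limit $x,y\to 1^{-}$. A secondary point is to verify that the Eisenstein series $f$ and the cusp form $g$, which enter through visibly different theta products, nonetheless funnel into the same Kamp\'{e} de F\'{e}riet shape $F^{1;2;2}_{1;1;1}$.
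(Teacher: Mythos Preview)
Your outline has the right opening move (Mellin integral, Jacobi inversion on part of the product to get factors at reciprocal arguments) and the right closing move (a Beta integral producing the coupling $(2)_{m+n}/(\tfrac52)_{m+n}$, resp.\ $(\tfrac32)_{m+n}/(2)_{m+n}$), but the middle is a genuine gap. The Rogers--Zudilin step here is \emph{not} an integration to $K_{2}$-Bessel sums followed by a resummation; it is the substitution $u\mapsto nu/(2r-1)$ performed \emph{inside} the $u$-integral, before integrating. That substitution reshuffles the exponents so that one of the two Lambert double sums re-collapses into a closed theta product via an Eisenstein-type identity (for $f$: $\sum_{n,s\ge1}\chi_{-4}(n)n^{2}q^{n(2s-1)}=\tfrac14\theta_{2}^{2}(q^{2})\theta_{4}^{4}(q^{2})$), while the other reduces to an elementary weight-$0$ series such as $\sum_{r,k\ge1}q^{(2r-1)(2k-1)}/(2r-1)$. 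A second application of the involution and the substitution $u\mapsto 1/u$ then give, e.g.,
\[
L(f,3)=\frac{\pi^{2}}{8}\int_{0}^{1}\theta_{2}^{4}(q)\theta_{4}^{2}(q)\Bigl(\sum_{r,k\ge1}\frac{q^{(2r-1)(2k-1)}}{2r-1}\Bigr)\frac{dq}{q}.
\]
Your Bessel detour never reaches such an expression, and there is no evident way to pass from a $K_{2}$-sum to the Beta integral you invoke at the end.

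The step that actually connects this to the Kamp\'e de F\'eriet value, and which your proposal does not mention, is the change of variable to the modulus $\alpha=\theta_{2}^{4}/\theta_{3}^{4}$ via $\theta_{3}^{2}={}_{2}F_{1}\bigl[\tfrac12,\tfrac12;1;\alpha\bigr]$ and $\theta_{3}^{4}\,dq/q=d\alpha/(\alpha(1-\alpha))$. Under this parametrization the leftover weight-$0$ sum is itself a Gauss hypergeometric function, namely $\tfrac{\alpha}{16}\,{}_{2}F_{1}[1,1;2;\alpha]$ for $f$ and $\tfrac{\alpha^{1/2}}{4}\,{}_{2}F_{1}[\tfrac12,1;\tfrac32;\alpha]$ for $g$; the integrand becomes a monomial in $\alpha,1-\alpha$ times a product of two ${}_{2}F_{1}$'s in $\alpha$, and the Euler Beta integral over $\alpha\in[0,1]$ yields the stated $F^{1;2;2}_{1;1;1}$ series term by term. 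This is where your $(1/2)_{n}^{2}/(1)_{n}^{2}$ and $1/(m+1)$ (resp.\ $1/(2m+1)$) patterns genuinely come from. Two smaller points: the integral you need has weight $t^{2}$, and the RZ substitution is precisely what drops it to $t^{1}$ so that the second involution removes it; and the $F^{1;2;2}_{1;1;1}$ series at $(1,1)$ with these parameters is absolutely convergent by the standard parameter test, so no Abel limit is needed.
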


\begin{theo}\label{val4}
\begin{enumerate}
\item 
\begin{align*}
 L(f,4)  
=
\frac{\pi^{3}}{288} \left(
3  F_{1;2;1}^{1;3;2}\left[ \left.
\begin{matrix}
\frac{1}{2} \\
\frac{3}{2}
\end{matrix}
;
\begin{matrix}
1, 1, 1 \\
\frac{3}{2}, \frac{3}{2}
\end{matrix}
; 
\begin{matrix}
\frac{1}{2}, \frac{1}{2} \\
1
\end{matrix}
\right| 1, 1
\right]
+ 
 F_{1;2;1}^{1;3;2}\left[\left.
\begin{matrix}
\frac{3}{2} \\
\frac{5}{2}
\end{matrix}
;
\begin{matrix}
1, 1, 1 \\
\frac{3}{2}, \frac{3}{2}
\end{matrix}
; 
\begin{matrix}
\frac{1}{2}, \frac{1}{2} \\
1
\end{matrix}
\right| 1, 1
\right] 
\right) . 
\end{align*}

\item  
\begin{align*}
L(g, 4) = \frac{\pi^{4}}{768} \left( 
2 F_{1;2;1}^{1;3;2}\left[ \left.
\begin{matrix}
\frac{1}{2} \\
1
\end{matrix}
;
\begin{matrix}
1, 1, 1 \\
\frac{3}{2}, \frac{3}{2}
\end{matrix}
;
\begin{matrix}
\frac{1}{2}, \frac{1}{2} \\
1
\end{matrix}
\right| 1,1
\right] 
+ F_{1;2;1}^{1;3;2}\left[ \left.
\begin{matrix}
\frac{1}{2} \\
2
\end{matrix}
;
\begin{matrix}
1, 1, 1 \\
\frac{3}{2}, \frac{3}{2}
\end{matrix}
;
\begin{matrix}
\frac{1}{2}, \frac{1}{2} \\
1
\end{matrix}
\right| 1,1
\right] \right) . 
\end{align*}
\end{enumerate}
\end{theo}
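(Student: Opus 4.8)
The plan is to turn each $L$-value into a Mellin integral, then — via the classical theta-to-elliptic-integral dictionary — into a real integral of complete elliptic integrals, and finally into a ``Beta integral of a product of hypergeometric series,'' whose term-by-term evaluation is by definition a Kampé de Fériet value. Concretely: since $f^{\#}(0)=g^{\#}(0)=0$, both $L(f,s)$ and $L(g,s)$ are entire and satisfy $L(h,s)=\frac{(2\pi)^{s}}{\Gamma(s)}\int_{0}^{\infty}h(e^{-2\pi t})t^{s-1}\,dt$ (the theorem of Shimura recalled in the introduction). Writing $q=e^{-2\pi t}$ and letting $k=k(it)$ be the modulus with $K(k')/K(k)=2t$, the uniformization $\theta_{3}^{2}=\frac{2}{\pi}K(k)$, $\theta_{2}^{2}=\frac{2}{\pi}kK(k)$, $\theta_{4}^{2}=\frac{2}{\pi}k'K(k)$ gives $f(q)=\frac{1}{2\pi^{3}}K(k)^{3}k^{2}k'$, and, using $\theta_{4}^{2}(q^{2})=\theta_{3}(q)\theta_{4}(q)$, $g(q)=\frac{1}{2\pi^{3}}K(k)^{3}k^{2}\sqrt{k'}$. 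Substituting $v=k^{2}$ (so $dt=-\frac{\pi}{8v(1-v)K(\sqrt v)^{2}}\,dv$) yields $L(f,4)=\frac{\pi^{2}}{48}\int_{0}^{1}(1-v)^{-1/2}K(\sqrt{1-v})^{3}K(\sqrt v)^{-2}\,dv$, and the analogue for $L(g,4)$ with $(1-v)^{-3/4}$.

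The factor $K(\sqrt v)^{-2}$ blocks a direct power-series expansion, so I would integrate by parts via $K(\sqrt v)^{-2}=-\frac{4v(1-v)}{\pi}\frac{d}{dv}\big(K(\sqrt{1-v})/K(\sqrt v)\big)$ — a consequence of the Legendre relation $EK'+E'K-KK'=\frac{\pi}{2}$ — after which the boundary terms vanish. Differentiating the resulting polynomial-times-$K(\sqrt{1-v})^{3}$ factor with the help of $\frac{d}{dv}K(\sqrt v)=\frac{E(\sqrt v)-(1-v)K(\sqrt v)}{2v(1-v)}$ and invoking Legendre once more to collapse the outcome, $L(f,4)$ (resp. $L(g,4)$) becomes a short linear combination — the coefficient $3$ (resp. $2$) being a residue of this differentiation — of integrals over $(0,1)$ carrying only nonnegative powers of $K$: each integrand is a fractional power of $1-v$ times a power series in $v$ (a quotient by $K(\sqrt v)$, regular at $v=0$) and a power series in $1-v$ built from $K(\sqrt{1-v})$, $E(\sqrt{1-v})$. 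A further Landen/quadratic change of variables ($\tau\mapsto 2\tau$) then normalizes each of these to $\int_{0}^{1}x^{a-1}(1-x)^{c-a-1}\,{}_{p}F_{p-1}(\dots;x)\cdot{}_{2}F_{1}(\tfrac12,\tfrac12;1;x)\,dx$, with the last factor equal to $\frac{2}{\pi}K(\sqrt x)$.

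In this normal form, expanding both hypergeometric series in powers of $x$, interchanging sum and integral, and using $\int_{0}^{1}x^{m+n+a-1}(1-x)^{c-a-1}\,dx=B(m+n+a,c-a)$ produces a double series in which the Beta factor supplies exactly the coupled Pochhammer ratio $(a)_{m+n}/(c)_{m+n}$ occurring in the $(a_{i})_{m+n}/(c_{i})_{m+n}$ slots of $F^{A;B;B'}_{C;D;D'}$, while the coefficient sequences of ${}_{p}F_{p-1}(\dots;x)$ and of ${}_{2}F_{1}(\tfrac12,\tfrac12;1;x)$ supply the parameters attached to $m$ and to $n$; matching these against the definition of the Kampé de Fériet function and bookkeeping the elementary prefactors ($\pi$'s, $\Gamma$-values, and the coefficients from the integration by parts) gives Theorem~\ref{val4}. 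As an anchor for the bookkeeping, the same mechanism applied to $s=3$ reduces Theorem~\ref{val3}(1) to $L(f,3)=\frac{\pi}{64}\int_{0}^{1}(1-x)^{-1/2}(-\log(1-x))K(\sqrt x)\,dx$, since ${}_{2}F_{1}(1,1;2;x)=-\log(1-x)/x$.

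The crux is the middle step: pinning down the exact auxiliary substitution that yields the Beta-integral normal form, and running the $s=4$ integration by parts so cleanly that the second-kind integral $E$ is entirely eliminated via Legendre and only power series in $x$ and in $1-x$ remain (this is also where the precise coefficients $3$ and $2$ have to be extracted). A secondary difficulty is that the interchange of summation and integration sits at the boundary of convergence — the series $\sum_{n}(1/2)_{n}^{2}/(n!)^{2}={}_{2}F_{1}(\tfrac12,\tfrac12;1;1)$ already diverges, and the Kampé de Fériet double series converges only because of the extra decay furnished by the $(c_{i})_{m+n}$ in the denominator — so the interchange must be justified by an Abel-type estimate on the partial sums of the $x$-expansion under the integral sign rather than by naive dominated convergence.
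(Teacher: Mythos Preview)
Your route differs fundamentally from the paper's, and the step you flag as ``the crux'' is in fact a genuine gap. The paper never integrates by parts against $d(K'/K)$. Instead it applies the Rogers--Zudilin method: starting from the Mellin integral, it expands $\theta_{2}^{4}$ and $\theta_{2}^{2}$ as Lambert series, uses the involution $\sqrt{u}\,\theta_{4}(e^{-\pi u})=\theta_{2}(e^{-\pi/u})$ together with a rescaling of the integration variable to rearrange the quadruple sum, and lands on Proposition~\ref{intrep4}. One factor is then identified via Duke's formula
\[
\sum_{n,r\ge 1}\frac{\chi_{-4}(n)}{(2r-1)^{2}}\,q^{n(r-1/2)}
=\frac{\alpha^{1/2}}{4}\,
\frac{{}_{3}F_{2}\!\bigl[\begin{smallmatrix}1,1,1\\ 3/2,\,3/2\end{smallmatrix};\alpha\bigr]}
{{}_{2}F_{1}\!\bigl[\begin{smallmatrix}1/2,\,1/2\\ 1\end{smallmatrix};\alpha\bigr]},
\]
and the ${}_{2}F_{1}$ in the denominator cancels one $\theta_{3}^{2}$ from the Jacobian $dq/q=d\alpha/(\alpha(1-\alpha)\theta_{3}^{4})$. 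The coefficients $3$ and $2$ do not come from differentiation at all; they arise from the theta identities $2\theta_{4}^{8}(q^{2})-\theta_{4}^{8}(q)=(1+\alpha)(1-\alpha)\theta_{3}^{8}(q)$ and $2\theta_{4}^{8}(q^{4})-\theta_{4}^{8}(q^{2})=\tfrac{1}{2}\bigl((1-\alpha)^{1/2}+(1-\alpha)^{3/2}\bigr)\theta_{3}^{8}(q)$, after which term-by-term Beta integration is immediate.

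In your scheme, after one integration by parts the integrand contains $K'^{4}/K$ and $K'^{3}E'/K$; substituting the Legendre relation $E'=\tfrac{\pi}{2K}+K'-EK'/K$ reintroduces $K^{-2}$ and now also $E$, so ``invoking Legendre once more'' does not collapse to nonnegative powers of $K$ as you assert. Even granting regularity at $v=0$, the factor $K(\sqrt v)^{-1}$ is not a hypergeometric series --- its Taylor coefficients have no Pochhammer form --- so termwise Beta integration cannot yield a Kamp\'e de F\'eriet series. Your proposed fix, ``a further Landen/quadratic change of variables ($\tau\mapsto 2\tau$),'' cannot convert $K^{-1}$ into a positive power of $K$: Landen only rescales $K$ by an algebraic function of the modulus. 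Finally, your anchor is not a check on your mechanism: the direct substitution at $s=3$ gives $L(f,3)=\tfrac{\pi}{16}\int_{0}^{1}(1-v)^{-1/2}K(\sqrt{1-v})^{2}K(\sqrt v)^{-1}\,dv$, with the same bad $K^{-1}$, whereas the formula $\tfrac{\pi}{64}\int_{0}^{1}(1-x)^{-1/2}(-\log(1-x))K(\sqrt x)\,dx$ you quote is exactly the output of the Rogers--Zudilin rearrangement (Proposition~\ref{intrep3} and Lemma~\ref{serieswt0}). The missing idea is precisely that rearrangement (equivalently, Duke's identity), which is what removes the $K$-denominator and places both hypergeometric factors in the same variable.
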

We remark that the double series $F_{A:B:C}^{A:B+1:C+1}(x,y)$ converges absolutely 
on $|x| \leqq 1$ and $|y| \leqq 1$ when the parameters satisfy the following conditions \cite[Theorem 1]{hms}
\begin{align*}
&{\rm Re}\left( \sum_{i=1}^{A} c_{i} + \sum_{i=1}^{B} d_{i} - \sum_{i=1}^{A} a_{i} - \sum_{i=1}^{B+1} b_{i} \right) > 0, \\
&{\rm Re}\left( \sum_{i=1}^{A} c_{i} + \sum_{i=1}^{C} d^{\prime}_{i} - \sum_{i=1}^{A} a_{i} - \sum_{i=1}^{C+1} b_{i}^{\prime} \right) > 0, \\
&{\rm Re}\left( \sum_{i=1}^{A} c_{i} + \sum_{i=1}^{B} d_{i} + \sum_{i=1}^{C} d_{i}^{\prime} 
- \sum_{i=1}^{A} a_{i} - \sum_{i=1}^{B+1} b_{i} -\sum_{i=1}^{C+1} b_{i}^{\prime} \right) > 0.
\end{align*}

We prove Theorems \ref{val3} and \ref{val4} by using the Rogers-Zudilin method. The strategy of the method is as follows. 
For $h(q) = \sum_{n=0}^{\infty} a_{n}q^{n}$ and $n \in \mathbb{Z}_{\geqq 1}$, the value $L(h, n)$ is obtained by 
the Mellin transformation of $h(q)$
\begin{align}
L(h, n) = \frac{(-1)^{n-1}}{\Gamma (n)} \int_{0}^{1} (h(q) -a_{0}) (\log q)^{n-1} \frac{dq}{q}. \label{mellin}
\end{align}
The key to express the value $L(h,n)$ in terms of special values of hypergeometric functions is the following transformation formulas 
\begin{align}
\theta_{3}^{2}(q) 
= {_{2}F_{1}}\left[ \left.
\begin{matrix}
\frac{1}{2}, \frac{1}{2} \\
1 
\end{matrix} 
\right| \alpha 
\right], \hspace{5mm} 
\theta_{3}^{4}(q)\frac{dq}{q} = \frac{d\alpha}{\alpha (1-\alpha)}, \label{trans}
\end{align}
where $\alpha := \theta_{2}^{4}(q) / \theta_{3}^{4}(q)$. 
The former is \cite[p.101, Entry 6]{ramanujan3}, and the latter follows from the former and \cite[p.87, Entry 30]{ramanujan2}.
By these formulas, we reduce the integral \eqref{mellin} to 
an integral of the form
\begin{align*}
\int_{0}^{1} P(\alpha)
{_{p+1}F_{p}}\left[ \left.
\begin{matrix}
a_{1}, a_{2}, \dots , a_{p+1} \\
b_{1}, \dots, b_{p} 
\end{matrix} 
\right| \alpha 
\right]
{_{2}F_{1}}\left[ \left.
\begin{matrix}
\frac{1}{2}, \frac{1}{2} \\
1 
\end{matrix} 
\right| \alpha 
\right]
\frac{d\alpha}{\alpha (1-\alpha)} , 
\end{align*}
where $P(\alpha )$ is a polynomial in $\alpha^{k}(1-\alpha)^{l}$ for various $k$ and $l$.  
Then, by simple computations, we obtain hypergeometric expressions of $L$-values.

Finally, we remark that we have simpler hypergeometric expressions of the $L$-values $L(f, 3)$, $L(f, 4)$ and $L(g,3)$.   
Since $f(q)$ is the Eisenstein series twisted by the Dirichlet characters $\chi_{-4}(n) := {\rm Im}(i^{n})$ 
and $\psi (n) := (-1)^{n-1}$ \cite[p. 281, Lemma 3.32 (3.85)]{cooper}  
\begin{align*}
f(q) = \frac{1}{16}\theta_{2}^{4}(q)\theta_{4}^{2}(q)
= \sum_{n=1}^{\infty} \frac{(-1)^{n+1}n^{2}q^{n}}{1+q^{2n}} = \sum_{n, k=1}^{\infty} \psi (n) n^{2} \chi_{-4}(k)q^{nk},   
\end{align*}
we have  
\begin{align*}
L(f, s) = \sum_{n,k=1}^{\infty} \frac{\psi(n) n^{2}\chi_{-4}(k)}{n^{s}k^{s}}  
= L(\psi, s-2) L(\chi_{-4}, s),
\end{align*}
where $L(\chi, s)$ is the Dirichlet $L$-function associated to a Dirichlet character $\chi$. 
We know $L(\psi , 1) = \log 2$ and $L(\chi_{-4}, 3) = \pi^{3}/32$, hence we obtain 
\begin{align*}
L(f, 3) = \frac{\pi^{3} \log 2}{32}.
\end{align*}
Moreover, the value $L(f, 4)$ can be expressed in terms of ${_{5}F_{4}}$, since we have 
$L(\psi, 2) = \pi^{2}/12$ and 
\begin{align*}
L(\chi_{-4}, 4) = 
{_{5}F_{4}}\left[ \left. 
\begin{matrix}
\frac{1}{2}, \dots, \frac{1}{2}, 1 \\
\frac{3}{2}, \dots , \frac{3}{2}
\end{matrix}
\right| -1 \right]
= {_{5}F_{4}}\left[ \left. 
\begin{matrix}
\frac{1}{4}, \dots, \frac{1}{4} , 1 \\
\frac{5}{4}, \dots , \frac{5}{4}
\end{matrix}
\right| 1 \right]
- \frac{1}{81}
{_{5}F_{4}}\left[ \left. 
\begin{matrix}
\frac{3}{4}, \dots, \frac{3}{4}, 1 \\
\frac{7}{4}, \dots , \frac{7}{4}
\end{matrix}
\right| 1 \right], 
\end{align*}
which easily follows from
\begin{align*}
2n+1 = \frac{\left( \frac{3}{2} \right)_{n}}{\left( \frac{1}{2} \right)_{n}}, \hspace{3mm}
4n+1 = \frac{\left( \frac{5}{4} \right)_{n}}{\left( \frac{1}{4} \right)_{n}}, \hspace{3mm}
4n+3 = \frac{3\left( \frac{7}{4} \right)_{n}}{\left( \frac{3}{4} \right)_{n}} .
\end{align*}
Similarly, we have a simpler expression of $L(g,3)$ \cite[Corollary 1.3]{samart}
\begin{align*}
L(g, 3) = \frac{\pi^{3}}{1024} \left( 
48 \log 2
- 
{_{5}F_{4}}\left. 
\left[
\begin{matrix}
\frac{3}{2}, \frac{3}{2}, \frac{3}{2}, 1, 1 \\
2, 2, ,2 ,2
\end{matrix}
\right| 1 
\right] 
\right) .
\end{align*}
Therefore we obtain via $L$-values the following reduction formulas for Kamp{\'e} de F{\'e}riet hypergeometric functions.

\begin{cor}
\begin{enumerate}
\item

\begin{align*}
F_{1;1;1}^{1;2;2}\left[\left.
\begin{matrix}
2 \\
\frac{5}{2}
\end{matrix}
; 
\begin{matrix}
1, 1 \\
2
\end{matrix}
;
\begin{matrix}
\frac{1}{2}, \frac{1}{2} \\
1
\end{matrix}
\right| 1, 1
\right] = 3\pi\log 2. 
\end{align*}

\item
\begin{align*}
8F_{1;1;1}^{1;2;2} \left[ \left. 
\begin{matrix}
\frac{3}{2} \\
2
\end{matrix}
;
\begin{matrix}
\frac{1}{2}, 1 \\
\frac{3}{2}
\end{matrix}
;
\begin{matrix}
\frac{1}{2}, \frac{1}{2} \\
1
\end{matrix}
\right|
1,1
\right] 
=48 \log 2 - 
{_{5}F_{4}}
\left[ \left. 
\begin{matrix}
\frac{3}{2}, \frac{3}{2}, \frac{3}{2}, 1, 1 \\
2, 2, ,2 ,2
\end{matrix}
\right| 1 
\right] . 
\end{align*}

\item
\begin{align*}
&\frac{\pi}{24} \left(
3  F_{1;2;1}^{1;3;2}\left[ \left.
\begin{matrix}
\frac{1}{2} \\
\frac{3}{2}
\end{matrix}
;
\begin{matrix}
1, 1, 1 \\
\frac{3}{2}, \frac{3}{2}
\end{matrix}
; 
\begin{matrix}
\frac{1}{2}, \frac{1}{2} \\
1
\end{matrix}
\right| 1, 1
\right]
+ 
 F_{1;2;1}^{1;3;2}\left[\left.
\begin{matrix}
\frac{3}{2} \\
\frac{5}{2}
\end{matrix}
;
\begin{matrix}
1, 1, 1 \\
\frac{3}{2}, \frac{3}{2}
\end{matrix}
; 
\begin{matrix}
\frac{1}{2}, \frac{1}{2} \\
1
\end{matrix}
\right| 1, 1
\right] 
\right)  \\
&= 
{_{5}F_{4}}\left[ \left. 
\begin{matrix}
\frac{1}{2}, \dots, \frac{1}{2}, 1 \\
\frac{3}{2}, \dots , \frac{3}{2}
\end{matrix}
\right| -1 \right] . 
\end{align*}
\end{enumerate}
\end{cor}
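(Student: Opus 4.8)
The plan is to derive the Corollary directly by combining Theorems~\ref{val3} and \ref{val4} with the elementary $L$-value identities already assembled in the Introduction. The Kamp\'e de F\'eriet expressions of $L(f,3)$, $L(g,3)$ and $L(f,4)$ on the one hand, and the arithmetic factorizations $L(f,s)=L(\psi,s-2)L(\chi_{-4},s)$, Samart's formula for $L(g,3)$, and the classical values $L(\psi,1)=\log 2$, $L(\psi,2)=\pi^2/12$, $L(\chi_{-4},3)=\pi^3/32$ on the other hand, give two independent closed forms for the same number; equating them and clearing the elementary constants yields each part.

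For part (1), I would start from Theorem~\ref{val3}(1), which says $L(f,3)=\frac{\pi^2}{96}\,F_{1;1;1}^{1;2;2}[\cdots]$, and from the Introduction's computation $L(f,3)=\frac{\pi^3\log 2}{32}$. Equating and solving for the Kamp\'e de F\'eriet value gives $F_{1;1;1}^{1;2;2}[\cdots]=\frac{96}{\pi^2}\cdot\frac{\pi^3\log 2}{32}=3\pi\log 2$, which is exactly the claimed identity. For part (2), I would combine Theorem~\ref{val3}(2), $L(g,3)=\frac{\pi^3}{128}F_{1;1;1}^{1;2;2}[\cdots]$, with Samart's expression $L(g,3)=\frac{\pi^3}{1024}\big(48\log 2-{}_{5}F_{4}[\cdots]\big)$; cancelling $\pi^3$ and multiplying through by $1024/128=8$ turns $8F_{1;1;1}^{1;2;2}[\cdots]=48\log 2-{}_{5}F_{4}[\cdots]$, the asserted formula. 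For part (3), I would use Theorem~\ref{val4}(1), which writes $L(f,4)=\frac{\pi^3}{288}\big(3F_{1;2;1}^{1;3;2}[\cdots]+F_{1;2;1}^{1;3;2}[\cdots]\big)$, together with the factorization $L(f,4)=L(\psi,2)L(\chi_{-4},4)=\frac{\pi^2}{12}\,{}_{5}F_{4}[\,\cdots\,|-1]$ from the Introduction; dividing both sides by $\pi^2/12$ and noting $\frac{\pi^3}{288}\big/\frac{\pi^2}{12}=\frac{\pi}{24}$ gives precisely the stated equality.

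The steps here are entirely bookkeeping: matching the normalizing rational constants ($96/32$, $1024/128$, $288\cdot 12$) and verifying that the Kamp\'e de F\'eriet and ${}_{5}F_{4}$ data printed in the Corollary are literally the ones appearing in Theorems~\ref{val3}, \ref{val4} and in Samart's and the twisted-Eisenstein computations. I would also check that the convergence hypotheses from \cite[Theorem 1]{hms} quoted after Theorem~\ref{val4} are met for each series so that all the numbers in play are finite and the term-by-term manipulations are legitimate; this is routine since those checks underlie Theorems~\ref{val3} and \ref{val4} themselves.

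The only real subtlety — and hence the step I would treat most carefully — is ensuring consistency of the ${}_{5}F_{4}$ representations of $L(\chi_{-4},4)$ and of $L(g,3)$: the Introduction gives $L(\chi_{-4},4)$ both as ${}_{5}F_{4}[\tfrac12,\dots,\tfrac12,1;\tfrac32,\dots,\tfrac32\,|-1]$ and as a difference of two unit-argument ${}_{5}F_{4}$'s via the Pochhammer identities $2n+1=(\tfrac32)_n/(\tfrac12)_n$, etc., and part (3) of the Corollary is stated with the $-1$ version, so no rewriting is needed there; for part (2) one must simply quote Samart's normalization \cite[Corollary 1.3]{samart} verbatim. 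Once the arithmetic of the constants is pinned down, the proof is immediate, so I would present it as three short paragraphs, one per item, each a single line of equating-and-cancelling.
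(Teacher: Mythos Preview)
Your proposal is correct and is exactly the argument the paper intends: the Corollary is stated immediately after the ``simpler'' evaluations of $L(f,3)$, $L(f,4)$ and $L(g,3)$ with the phrase ``Therefore we obtain via $L$-values the following reduction formulas,'' and no separate proof is given. Your bookkeeping of the constants ($96/32=3$, $1024/128=8$, $\frac{\pi^{3}}{288}\big/\frac{\pi^{2}}{12}=\frac{\pi}{24}$) matches precisely.
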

The author does not know how to derive these formulas directly. 
It is new that the value $L(g,4)$ is expressed in terms of special values of hypergeometric functions. 
Similarly to the results above, 
one might be able to express the value $L(g,4)$ in terms of special values of generalized hypergeometric functions.

\section{Proof of Theorem \ref{val3}}\label{proof1}

We first show the following integral expressions of the $L$-values $L(f,3)$ and $L(g,3)$. 
\begin{prop}\label{intrep3}
\begin{enumerate}
\item 
\begin{align*}
L(f, 3) = \frac{\pi^{2}}{8} 
\int_{0}^{1} \theta_{2}^{4}(q) \theta_{4}^{2}(q) \sum_{r,k=1}^{\infty} \frac{q^{(2r-1)(2k-1)}}{2r-1} \frac{dq}{q}. 
\end{align*}

\item  
\begin{align*}
L(g,3) = \frac{\pi^{2}}{16}
\int_{0}^{1}
\theta_{2}^{4}(q)\theta_{4}^{2}(q)
\sum_{r,k=1}^{\infty} \frac{q^{2(r-1/2)(k-1/2)}}{2r-1} \frac{dq}{q}.
\end{align*}

\end{enumerate}
\end{prop}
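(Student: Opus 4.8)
The plan is to derive both identities from the Mellin transform \eqref{mellin} at $n=3$, followed by a Rogers--Zudilin rearrangement of the resulting Dirichlet series; the role of this proposition is to strip off the factor $(\log q)^{2}$ before the substitution $\alpha=\theta_{2}^{4}/\theta_{3}^{4}$ of \eqref{trans} is applied, since $(\log q)^{2}=\pi^{2}\bigl({_{2}F_{1}}(\tfrac12,\tfrac12;1;1-\alpha)/{_{2}F_{1}}(\tfrac12,\tfrac12;1;\alpha)\bigr)^{2}$ is not a rational expression in $\alpha$. Since $f(0)=g(0)=0$, formula \eqref{mellin} with $n=3$ gives
\[
L(f,3)=\frac{1}{32}\int_{0}^{1}\theta_{2}^{4}(q)\theta_{4}^{2}(q)(\log q)^{2}\,\frac{dq}{q},\qquad
L(g,3)=\frac{1}{32}\int_{0}^{1}\theta_{2}^{4}(q)\theta_{4}^{2}(q^{2})(\log q)^{2}\,\frac{dq}{q},
\]
so it suffices to prove the corresponding $\log$-free integral formulas.

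I would then insert the Lambert expansion of the theta product and carry out the elementary $q$-integral $\int_{0}^{1}q^{N}(\log q)^{2}\frac{dq}{q}=2/N^{3}$. For $f$ this uses the expansion $\frac{1}{16}\theta_{2}^{4}(q)\theta_{4}^{2}(q)=\sum_{m,k\ge1}\psi(m)m^{2}\chi_{-4}(k)q^{mk}$ recalled in the introduction, and reproduces $L(f,3)=\sum_{m,k\ge1}\psi(m)\chi_{-4}(k)\,m^{-1}k^{-3}=L(\psi,1)L(\chi_{-4},3)$; for $g$ one first needs the analogous Lambert/Eisenstein expansion, which comes from the doubling identity $\theta_{4}^{2}(q^{2})=\theta_{3}(q)\theta_{4}(q)$ and the classical Lambert series for $\theta_{2}^{4}$ and $\theta_{3}\theta_{4}$ (equivalently, from the CM description of the newform $g$ over $\mathbb{Z}[i]$), and gives an analogous double Dirichlet series. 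On the other side, writing $\theta_{2}^{4}(q)\theta_{4}^{2}(q)=16\sum_{m,l}\psi(m)m^{2}\chi_{-4}(l)q^{ml}$ and expanding $\sum_{r,k}\frac{q^{(2r-1)(2k-1)}}{2r-1}$ (resp.\ $\sum_{r,k}\frac{q^{2(r-1/2)(k-1/2)}}{2r-1}$), term-by-term integration against $\frac{dq}{q}$ produces the quadruple series $\sum_{m,l,r,k}\psi(m)m^{2}\chi_{-4}(l)\bigl[(2r-1)\bigl(ml+(2r-1)(2k-1)\bigr)\bigr]^{-1}$, with the obvious modification in the $g$-case.

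The Rogers--Zudilin step is then a partial-fraction decomposition of $\bigl[(2r-1)(ml+(2r-1)(2k-1))\bigr]^{-1}$ together with a reorganisation of the sums that collapses this quadruple series to the double Dirichlet series of the previous paragraph, the constant $\pi^{2}/8$ (resp.\ $\pi^{2}/16$) being produced when a geometric parameter is summed out and one meets $\sum_{r\ge1}(2r-1)^{-2}=\pi^{2}/8$, i.e.\ the classical relation $L(\chi_{-4},3)=\frac{\pi^{2}}{8}L(\chi_{-4},1)$ and its counterpart for $g$. Equivalently, one can package this matching through the Mellin--Parseval formula, expressing $\int_{0}^{1}16f(q)\sum_{r,k}\frac{q^{(2r-1)(2k-1)}}{2r-1}\frac{dq}{q}$ as a vertical contour integral whose integrand is a constant multiple of $\Gamma(s)\Gamma(1-s)L(f,s)\lambda(2-s)\lambda(1-s)$, $\lambda(s)=(1-2^{-s})\zeta(s)$, and then shifting the contour.

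I expect the Rogers--Zudilin rearrangement to be the main obstacle: the coefficients of $\theta_{2}^{4}\theta_{4}^{2}$ grow polynomially while $\sum_{r,k}\frac{q^{(2r-1)(2k-1)}}{2r-1}$ decays only like a single power, so the quadruple series above is not absolutely convergent and the exchange of summations has to be justified by an Abel-summation or analytic-truncation argument (or, in the Mellin--Parseval version, by controlling the integrand on the shifted line and bookkeeping the poles of $\Gamma$ and $\lambda$). A secondary difficulty, specific to part (2), is identifying the right Lambert/CM expansion of $g$ and handling the $q\leftrightarrow q^{2}$ change of variable responsible both for the exponents $q^{2(r-1/2)(k-1/2)}$ and for the appearance of the theta product $\theta_{2}^{4}(q)\theta_{4}^{2}(q)$ rather than $\theta_{2}^{4}(q)\theta_{4}^{2}(q^{2})$; the doubling formulas $\theta_{4}^{2}(q^{2})=\theta_{3}(q)\theta_{4}(q)$ and $\theta_{2}^{2}(q^{1/2})=2\theta_{2}(q)\theta_{3}(q)$ are the tools that interchange the two products.
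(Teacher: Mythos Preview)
Your starting point via \eqref{mellin} is correct, but the core step diverges from the paper's and, as you yourself flag, runs into convergence trouble. The Rogers--Zudilin method here is not a series rearrangement or a partial-fraction identity on the Dirichlet side: it is an integral manipulation. The paper passes to $q=e^{-\pi u}$, writing $L(f,3)=\frac{\pi^{3}}{32}\int_{0}^{\infty}\theta_{2}^{4}(e^{-\pi u})\theta_{4}^{2}(e^{-\pi u})\,u^{2}\,du$, then applies the Jacobi involution $\sqrt{u}\,\theta_{4}(e^{-\pi u})=\theta_{2}(e^{-\pi/u})$ to the factor $\theta_{4}^{2}$ only, and Lambert-expands both $\theta_{2}^{4}(e^{-\pi u})$ and $\theta_{2}^{2}(e^{-\pi/u})$. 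The heart of the argument is the termwise substitution $u\mapsto nu/(2r-1)$ inside the $(0,\infty)$-integral: this redistributes the index weights between the two Lambert expansions and absorbs the factor $u^{2}$. The resulting $e^{-\pi u(\cdots)}$-series is then recognised as $\tfrac{1}{4}\theta_{2}^{2}(q^{2})\theta_{4}^{4}(q^{2})$ via an identity from \cite{cooper}, a second involution converts this back into $\theta_{2}^{4}\theta_{4}^{2}$ at $e^{-\pi/2u}$, and the changes $u\mapsto 1/u$, $q=e^{-\pi u}$, $q\mapsto q^{2}$ give the stated formula. Part (2) follows the same template; no separate Lambert or CM expansion of $g$ is required.

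Your proposed route---expand both sides as quadruple Dirichlet-type sums and match them by partial fractions in $\bigl[(2r-1)(ml+(2r-1)(2k-1))\bigr]^{-1}$---is a genuinely different idea, and I do not see how to carry it out. That denominator does not admit a partial-fraction split that decouples the indices, and the relation $L(\chi_{-4},3)=\tfrac{\pi^{2}}{8}L(\chi_{-4},1)$ you invoke is a consequence of the functional equation rather than a mechanism for collapsing the quadruple sum. Your own warning that the quadruple series is not absolutely convergent is precisely the obstruction: the paper's involution-and-rescale trick avoids it because every manipulation is performed on exponentially decaying integrands over $(0,\infty)$ rather than on bare Dirichlet series. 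The Mellin--Parseval contour alternative you sketch could in principle be made rigorous, but it is substantially heavier than the paper's argument and you have not indicated how the pole bookkeeping would actually produce the factors $\pi^{2}/8$ and $\pi^{2}/16$.
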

\begin{proof}
We prove the formula for $L(f, 3)$ only. Similar computation leads to the remaining formula.

By \eqref{mellin}, we have
\begin{align*}
L(f, 3) = \frac{1}{2}\int_{0}^{1} \frac{1}{16} \theta_{2}^{4}(q)\theta_{4}^{2}(q) (\log q)^{2} \frac{dq}{q}.  
\end{align*}
By changing the variable $q =e^{-\pi u}$, we have 
\begin{align*}
L(f, 3) = \frac{\pi^{3}}{32} \int_{0}^{\infty} \theta_{2}^{4}(e^{-\pi u})\theta_{4}^{2}(e^{-\pi u}) u^{2}du. 
\end{align*}
If we use the involution formula for $\theta_{4}(q)$ \cite[p.40, (2.3.3)]{borweins},
the Lambert series expansions of $\theta_{2}^{2}(q)$ and $\theta_{2}^{4}(q)$ 
\cite[p.177, Theorem 3.10 (3.15) and p.196, Theorem 3.26 (3.69)]{cooper}
\begin{align}
\sqrt{u}\theta_{4}(e^{-\pi u}) &= \theta_{2}(e^{-\frac{\pi}{u}}), \label{inv} \\
\theta_{2}^{2}(q) &= 4 \sum_{n, k=1}^{\infty} \chi_{-4}(n)q^{n(k-1/2)},\hspace{4mm} \chi_{-4}(n) := {\rm Im}(i^{n}), \label{lam1}  \\
\theta_{2}^{4}(q) &= 16 \sum_{r, s=1}^{\infty} (2r-1)q^{(2r-1)(2s-1)}, \label{lam2}
\end{align}
and the substitution $u \mapsto nu / (2r-1)$, then we obtain 
\begin{align*}
L(f, 3)= 2\pi^{3}\int_{0}^{\infty} 
\left( \sum_{n,s=1}^{\infty} \chi_{-4}(n) n^{2}e^{-\pi u n(2s-1)} \right) 
\left( \sum_{r, k= 1}^{\infty} \frac{e^{- \frac{\pi  (2r-1) (k-1/2)}{u}}}{2r-1} \right)  udu. 
\end{align*}
We know that the first series in the integral above is the theta product \cite[Lemma 3.32 (3.84)]{cooper}
\begin{align*}
\sum_{n, s=1}^{\infty} \chi_{-4}(n)n^{2} q^{n (2s-1)} = \frac{1}{4} \theta_{2}^{2}(q^{2})\theta_{4}^{4}(q^{2}).
\end{align*}
By this identity and \eqref{inv}, we have 
\begin{align*}
L(f, 3) &= \frac{\pi^{3}}{2}\int_{0}^{\infty} \theta_{2}^{2}(e^{-2\pi u})\theta_{4}^{4}(e^{-2\pi u}) 
\left( \sum_{r, k= 1}^{\infty} \frac{e^{- \frac{\pi  (2r-1) (k-1/2)}{u}}}{2r-1} \right)  udu \\
&=\frac{\pi^{3}}{16}\int_{0}^{\infty} \theta_{4}^{2}(e^{-\pi/ 2 u})\theta_{2}^{4}(e^{- \pi / 2 u}) 
\left( \sum_{r, k= 1}^{\infty} \frac{e^{- \frac{\pi  (2r-1) (k-1/2)}{u}}}{2r-1}  \right) \frac{du}{u^{2}}.
\end{align*}
If we use the substitutions $u \mapsto 1/u$, $q= e^{-\pi u}$ and $q \mapsto q^{2}$, then we obtain the formula.

\end{proof}

The series in the integrals in Proposition \ref{intrep3} are hypergeometric functions. 
\begin{lemm}\label{serieswt0}
\begin{enumerate}
\item
\begin{align*}
\sum_{r,k=1}^{\infty} \frac{q^{(2r-1)(2k-1)}}{2r-1} 
= \frac{\alpha}{16} {_{2}F_{1}}
\left[ \left.
\begin{matrix}
1, 1 \\
2
\end{matrix}
\right| \alpha
\right].
\end{align*}

\item 
\begin{align*}
\sum_{r,k=1}^{\infty} \frac{q^{2(r-1/2)(k-1/2)}}{2r-1} 
= \frac{\alpha^{1/2}}{4}
{_{2}F_{1}}\left[ \left.
\begin{matrix}
\frac{1}{2}, 1 \\
\frac{3}{2}
\end{matrix}
\right| \alpha \right] .
\end{align*}
\end{enumerate}
\end{lemm}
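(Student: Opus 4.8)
The plan is to evaluate each of the two double series in closed form as an elementary function of $\alpha$ (a logarithm, resp.\ an inverse hyperbolic tangent) and then to check that the stated right-hand sides are the same closed forms, read off from the series definition of ${}_{2}F_{1}$; all identities are understood on $0<q<1$, equivalently $0<\alpha<1$, where every series converges. For (1), I would sum over $r$ first: putting $m=2r-1$ and using $\sum_{m\geq 1,\ m\ \mathrm{odd}}x^{m}/m=\tfrac12\log\frac{1+x}{1-x}$, the inner sum becomes $\tfrac12\log\frac{1+q^{2k-1}}{1-q^{2k-1}}$, so the whole series equals $\tfrac12\sum_{k=1}^{\infty}\log\frac{1+q^{2k-1}}{1-q^{2k-1}}$. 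Then I would invoke the Jacobi triple product in the form $\theta_{3}(q)=\prod_{n\geq 1}(1-q^{2n})(1+q^{2n-1})^{2}$ and $\theta_{4}(q)=\prod_{n\geq 1}(1-q^{2n})(1-q^{2n-1})^{2}$ (see, e.g., \cite{borweins}), which gives $\theta_{3}(q)/\theta_{4}(q)=\prod_{n\geq 1}\big((1+q^{2n-1})/(1-q^{2n-1})\big)^{2}$; taking $\log$ turns the series into $\tfrac1{16}\log\!\big(\theta_{3}^{4}(q)/\theta_{4}^{4}(q)\big)$. Finally, Jacobi's identity $\theta_{3}^{4}=\theta_{2}^{4}+\theta_{4}^{4}$ gives $\theta_{3}^{4}/\theta_{4}^{4}=1/(1-\alpha)$, so the series equals $-\tfrac1{16}\log(1-\alpha)$. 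Since $(1)_{n}/(2)_{n}=1/(n+1)$ one has ${}_{2}F_{1}(1,1;2;\alpha)=-\log(1-\alpha)/\alpha$, and multiplying by $\alpha/16$ reproduces $-\tfrac1{16}\log(1-\alpha)$, proving (1).

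For (2), the key observation is $2\big(r-\tfrac12\big)\big(k-\tfrac12\big)=\tfrac12(2r-1)(2k-1)$, so the series in (2) is the series in (1) with $q$ replaced by $q^{1/2}$; by (1) it equals $-\tfrac1{16}\log(1-\widetilde\alpha)$ with $\widetilde\alpha:=\theta_{2}^{4}(q^{1/2})/\theta_{3}^{4}(q^{1/2})$. I would then apply the ascending (Landen) transformations $\theta_{3}^{2}(q^{1/2})=\theta_{3}^{2}(q)+\theta_{2}^{2}(q)$ and $\theta_{2}^{2}(q^{1/2})=2\,\theta_{2}(q)\theta_{3}(q)$ (the $q\mapsto q^{1/2}$ forms of the standard duplication formulas), together with $\theta_{2}^{2}/\theta_{3}^{2}=\sqrt{\alpha}$, to obtain $\widetilde\alpha=4\sqrt{\alpha}/(1+\sqrt{\alpha})^{2}$, hence $1-\widetilde\alpha=\big((1-\sqrt{\alpha})/(1+\sqrt{\alpha})\big)^{2}$. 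Therefore the series equals $\tfrac18\log\frac{1+\sqrt{\alpha}}{1-\sqrt{\alpha}}$. On the hypergeometric side $(1/2)_{n}/(3/2)_{n}=1/(2n+1)$, so ${}_{2}F_{1}(1/2,1;3/2;\alpha)=\sum_{n\geq 0}\alpha^{n}/(2n+1)=\tfrac1{2\sqrt{\alpha}}\log\frac{1+\sqrt{\alpha}}{1-\sqrt{\alpha}}$, and $\tfrac{\sqrt{\alpha}}{4}\,{}_{2}F_{1}(1/2,1;3/2;\alpha)=\tfrac18\log\frac{1+\sqrt{\alpha}}{1-\sqrt{\alpha}}$ matches the series, proving (2).

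The arguments are elementary once the right product and transformation identities are at hand; the only delicate point is the bookkeeping of normalizations --- keeping the Jacobi triple product expansions and the Landen-type formulas in the $\theta_{i}(q)=\sum_{n}q^{n^{2}}$ convention of the paper, and tracking the factor $\tfrac1{16}$ that arises in passing between $\theta_{3}$ and $\theta_{3}^{4}$. I expect this normalization bookkeeping to be the only (minor) obstacle; there is no conceptual difficulty.
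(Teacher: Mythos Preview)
Your argument is correct, but it takes a genuinely different route from the paper's proof. The paper proceeds by ``differentiating then integrating'': it recognizes the $q\,d/dq$-derivative of each double series as a theta product via the Lambert expansion \eqref{lam2} (namely $\tfrac{1}{16}\theta_{2}^{4}(q)$ and $\tfrac{1}{8}\theta_{2}^{2}(q)\theta_{3}^{2}(q)$ respectively), then applies the differential change of variables $\theta_{3}^{4}(q)\,dq/q=d\alpha/(\alpha(1-\alpha))$ from \eqref{trans} to reduce to an $\alpha$-integral, and for part (ii) finishes with the Euler integral representation of ${}_{2}F_{1}$. By contrast, you sum the inner $r$-series to a logarithm and use the Jacobi triple product to identify the remaining $k$-product as $\theta_{3}/\theta_{4}$, landing directly on $-\tfrac{1}{16}\log(1-\alpha)$; for (ii) you substitute $q\mapsto q^{1/2}$ and use the Landen transformation $\widetilde{\alpha}=4\sqrt{\alpha}/(1+\sqrt{\alpha})^{2}$ to convert back. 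Your method is pleasantly self-contained---it does not need the differential relation \eqref{trans} or the Euler integral---while the paper's method fits more naturally into the overall Rogers--Zudilin framework, where \eqref{trans} is already the central tool.
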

\begin{proof}
We prove these hypergeometric expressions by using the transformation formulas \eqref{trans}. 

By \eqref{lam2}, we have
\begin{align*}
\sum_{r,k=1}^{\infty} \frac{q^{(2r-1)(2k-1)}}{2r-1} 
=\int_{0}^{q} \sum_{r,k=1}^{\infty} (2k-1)q^{(2r-1)(2k-1)} \frac{dq}{q}
=\frac{1}{16}\int_{0}^{q} \theta_{2}^{4}(q) \frac{dq}{q}.
\end{align*}
If we use \eqref{trans}, then the integral above is equal to 
\begin{align*}
\frac{1}{16}\int_{0}^{\alpha} \alpha \frac{d\alpha}{\alpha (1-\alpha)}
=-\frac{1}{16} \log (1-\alpha ) . 
\end{align*}
Since we know 
\begin{align*}
-\log (1-\alpha) = \alpha {_{2}F_{1}}
\left[ \left.
\begin{matrix}
1, 1 \\
2
\end{matrix}
\right| \alpha
\right], 
\end{align*}
we obtain the first formula. 

Similarly, we have
\begin{align*}
\sum_{r,k=1}^{\infty} \frac{q^{2(r-1/2)(k-1/2)}}{2r-1} 
=\frac{1}{32}\int_{0}^{q} \theta_{2}^{4}(q^{\frac{1}{2}}) \frac{dq}{q}
=\frac{1}{8}\int_{0}^{q} \theta_{2}^{2}(q)\theta_{3}^{2}(q) \frac{dq}{q}.
\end{align*}
Here we used $2\theta_{2}(q^{2})\theta_{3}(q^{2}) = \theta_{2}^{2}(q)$ \cite[p. 40, Entry 25 (iv)]{ramanujan3} for the last equality. 
Then, by \eqref{trans}, we obtain 
\begin{align*}
\int_{0}^{q} \theta_{2}^{2}(q)\theta_{3}^{2}(q) \frac{dq}{q}
=\int_{0}^{\alpha} \alpha^{\frac{1}{2}} \frac{d\alpha}{\alpha (1-\alpha)}. 
\end{align*}
By the integral representation of hypergeometric functions \cite[(1.6.6)]{slater}
\begin{align*}
{_{2}F_{1}} \left[ \left. 
\begin{matrix}
a,b \\
c
\end{matrix}
\right| 
z \right]
=\frac{\Gamma (c)}{\Gamma (b) \Gamma (c-b)} \int_{0}^{1} t^{b} (1-t)^{c-b} (1-zt)^{-a} \frac{dt}{t(1-t)},
\end{align*}
we have 
\begin{align*}
\int_{0}^{\alpha} t^{\frac{1}{2}} \frac{dt}{t (1-t)}
&=\alpha^{\frac{1}{2}} \int_{0}^{1} t^{\frac{1}{2}} (1-t) (1-\alpha t)^{-1} \frac{dt}{t(1-t)} \\
&=\alpha^{\frac{1}{2}} \frac{\Gamma\left( \frac{1}{2} \right) \Gamma (1)}{\Gamma\left( \frac{3}{2} \right) }
{_{2}F_{1}} \left[ \left. 
\begin{matrix}
\frac{1}{2}, 1 \\
\frac{3}{2}
\end{matrix}
\right| 
\alpha \right],
\end{align*}
hence we obtain the second formula.

\end{proof}

\begin{proof}[Proof of Theorem \ref{val3}]
By Lemma \ref{serieswt0} and the transformation formulas \eqref{trans}, we have 
\begin{align*}
L(f, 3) &= \frac{\pi^{2}}{128} \int_{0}^{1} 
\alpha^{2}(1-\alpha)^{1/2} 
{_{2}F_{1}}
\left[ \left.
\begin{matrix}
1, 1 \\
2
\end{matrix}
\right| \alpha
\right]
{_{2}F_{1}}
\left[ \left.
\begin{matrix}
\frac{1}{2}, \frac{1}{2} \\
1
\end{matrix}
\right| \alpha
\right]
\frac{d\alpha}{\alpha (1-\alpha)}, \\
L(g,3) &= \frac{\pi^{2}}{64}
\int_{0}^{1}\alpha^{3/2}(1-\alpha)^{1/2} 
{_{2}F_{1}}\left[ \left.
\begin{matrix}
\frac{1}{2}, 1 \\
\frac{3}{2}
\end{matrix}
\right| \alpha \right]
{_{2}F_{1}}\left[ \left.
\begin{matrix}
\frac{1}{2}, \frac{1}{2} \\
1
\end{matrix}
\right| \alpha \right] 
\frac{d\alpha}{\alpha (1-\alpha)}.
\end{align*}
If we use the series expansions of hypergeometric functions and integrate term-by-term, we obtain 
\begin{align*}
L(f, 3)
&=\frac{\pi^{2}}{128} \int_{0}^{1} 
\alpha^{2}(1-\alpha)^{1/2} 
{_{2}F_{1}}
\left[ \left.
\begin{matrix}
1, 1 \\
2
\end{matrix}
\right| \alpha
\right]
{_{2}F_{1}}
\left[ \left.
\begin{matrix}
\frac{1}{2}, \frac{1}{2} \\
1
\end{matrix}
\right| \alpha
\right]
\frac{d\alpha}{\alpha (1-\alpha)} \\
&= \frac{\pi^{2}}{128}\sum_{m,n=0}^{\infty} \frac{(1)_{m}^{2}}{(2)_{m}(1)_{m}} \frac{\left( \frac{1}{2} \right)_{n}^{2}}{(1)_{n}^{2}} 
\int_{0}^{1} \alpha^{2+m+n}(1-\alpha)^{\frac{1}{2}} \frac{d\alpha}{\alpha (1-\alpha)} \\
&= \frac{\pi^{2}}{128}\sum_{m,n=0}^{\infty} \frac{(1)_{m}^{2}}{(2)_{m}(1)_{m}} \frac{\left( \frac{1}{2} \right)_{n}^{2}}{(1)_{n}^{2}} 
\frac{\Gamma\left( 2+m+n \right)\Gamma\left( \frac{1}{2} \right)}{\Gamma\left( \frac{5}{2}+m+n \right)}\\
&= \frac{\pi^{2}}{96} F_{1;1;1}^{1;2;2}\left[\left.
\begin{matrix}
2 \\
\frac{5}{2}
\end{matrix}
; 
\begin{matrix}
1, 1 \\
2
\end{matrix}
;
\begin{matrix}
\frac{1}{2}, \frac{1}{2} \\
1
\end{matrix}
\right| 1, 1
\right]. 
\end{align*}

By similar computations, we have the hypergeometric expression of the value $L(g, 3)$.

\end{proof}

\section{Proof of Theorem \ref{val4}}\label{proof2}
Similarly to the computations in the proof of Proposition \ref{intrep3}, 
we obtain the following integral expressions of the $L$-values $L(f,4)$ and $L(g,4)$. 

\begin{prop}\label{intrep4}
\begin{enumerate}
\item 
\begin{align*}
L(f, 4) = \frac{\pi^{3}}{48} \int_{0}^{1}
(2\theta_{4}^{8}(q^{2}) - \theta_{4}^{8}(q))
\left( \sum_{n,r=1}^{\infty} \frac{\chi_{-4}(n)}{(2r-1)^{2}} q^{n(r-1/2)} \right) \frac{dq}{q}.
\end{align*}

\item 
\begin{align*}
L(g,4) = 
\frac{\pi^{3}}{48} \int_{0}^{1}
(2\theta_{4}^{8}(q^{4}) - \theta_{4}^{8}(q^{2}))
\left( \sum_{n,r=1}^{\infty} \frac{\chi_{-4}(n)}{(2r-1)^{2}} q^{n(r-1/2)} \right) \frac{dq}{q}.
\end{align*}

\end{enumerate}
\end{prop}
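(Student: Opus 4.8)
The plan is to imitate the proof of Proposition~\ref{intrep3} almost verbatim, replacing $(\log q)^{2}$ by $(\log q)^{3}$ in the Mellin transform \eqref{mellin}. Concretely, for $f$ one starts from $L(f,4)=-\tfrac16\int_{0}^{1}\tfrac1{16}\theta_{2}^{4}(q)\theta_{4}^{2}(q)(\log q)^{3}\tfrac{dq}{q}$, substitutes $q=e^{-\pi u}$ to obtain $L(f,4)=\tfrac{\pi^{4}}{96}\int_{0}^{\infty}\theta_{2}^{4}(e^{-\pi u})\theta_{4}^{2}(e^{-\pi u})u^{3}\,du$, and for $g$ one gets the same with $\theta_{4}^{2}(e^{-2\pi u})$ in place of $\theta_{4}^{2}(e^{-\pi u})$. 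From here the Rogers--Zudilin machine is applied: use the Jacobi imaginary transformation \eqref{inv} on one theta factor so that the power of $u$ drops from $u^{3}$ to $u^{2}$, expand the surviving theta powers by the Lambert series \eqref{lam1}, \eqref{lam2} (together with the companion expansions of $\theta_{4}^{2}$, $\theta_{4}^{4}$), and perform the Rogers--Zudilin rescaling of $u$ that decouples the resulting double series into an $e^{-\pi u}$-part and an $e^{-\pi/u}$-part, leaving a single power $u^{2}$ in the measure.

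At that point the $e^{-\pi u}$-part is a weight-$4$ modular form that must be written in closed theta form --- the weight-$4$ counterpart of the identity $\sum_{m,s}\chi_{-4}(m)m^{2}q^{m(2s-1)}=\tfrac14\theta_{2}^{2}(q^{2})\theta_{4}^{4}(q^{2})$ used inside the proof of Proposition~\ref{intrep3} --- after which one applies \eqref{inv} to it once more; the remaining $u^{2}$ is then cancelled by the transformation factor $u^{-4}$ of a weight-$4$ theta power together with the final change $u\mapsto 1/u$, and returning to the variable $q$ produces the stated integrals. The combination $2\theta_{4}^{8}(q^{2})-\theta_{4}^{8}(q)$ in part~(1) is precisely the image of that intermediate weight-$4$ form under \eqref{inv}, re-expressed through the classical relations $\theta_{3}^{4}=\theta_{2}^{4}+\theta_{4}^{4}$, $\theta_{3}(q)\theta_{4}(q)=\theta_{4}^{2}(q^{2})$ and $\theta_{2}^{2}(q)=2\theta_{2}(q^{2})\theta_{3}(q^{2})$; in the $g$-case the extra $q^{2}$ inside $\theta_{4}^{2}(q^{2})$ merely propagates through every substitution, replacing $q$ by $q^{2}$ in the theta factor, which is why one obtains $2\theta_{4}^{8}(q^{4})-\theta_{4}^{8}(q^{2})$ while the second factor $\sum_{n,r}\tfrac{\chi_{-4}(n)}{(2r-1)^{2}}q^{n(r-1/2)}$ is the same for $f$ and $g$.

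The step I expect to be the real obstacle is this identification: one has to pin down the correct closed theta form of the intermediate weight-$4$ series so that exactly the combination $2\theta_{4}^{8}-\theta_{4}^{8}$ (and not some other weight-$4$ expression) emerges, and with the right rational constant. I would settle it either by extracting the relevant Lambert-series identity from Cooper's tables \cite{cooper} or by matching finitely many $q$-expansion coefficients against a spanning set of the appropriate space of weight-$4$ modular forms. A secondary but purely clerical difficulty is tracking the powers of $u$, of $\pi$ and of $2$ through the two applications of \eqref{inv} and the substitution $u\mapsto 1/u$, since a slip there changes the overall constant $\tfrac{\pi^{3}}{48}$.
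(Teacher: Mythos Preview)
Your plan is correct and matches the paper's proof essentially step by step: Mellin transform, the substitution $q=e^{-\pi u}$, one application of \eqref{inv} to turn $\theta_{4}^{2}$ into $\theta_{2}^{2}(e^{-\pi/u})$, Lambert expansions \eqref{lam1}--\eqref{lam2}, the Rogers--Zudilin rescaling (here $u\mapsto (k-1/2)u/(2r-1)$), identification of the resulting weight-$4$ series as a closed theta expression, a second use of \eqref{inv}, and the final change $u\mapsto 1/u$, $q=e^{-2\pi u}$.

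Two small remarks. First, the weight-$4$ identity you flag as the obstacle is, in the paper, the explicit lemma
\[
\sum_{s,k\geqq 1}(2k-1)^{3}q^{(2s-1)(2k-1)}=\frac{\theta_{2}^{8}(q^{1/2})-8\,\theta_{2}^{8}(q)}{256},
\]
proved by rewriting the left side as $\tfrac{1}{240}\bigl(M(q)-9M(q^{2})+8M(q^{4})\bigr)$ with $M=E_{4}$ and then invoking Cooper's relation $\theta_{2}^{8}(q^{1/2})=\tfrac{16}{15}(M(q)-M(q^{2}))$; your suggestion to pull it from \cite{cooper} is exactly what happens. Second, the classical relations $\theta_{3}^{4}=\theta_{2}^{4}+\theta_{4}^{4}$, $\theta_{3}\theta_{4}=\theta_{4}^{2}(q^{2})$, $\theta_{2}^{2}(q)=2\theta_{2}(q^{2})\theta_{3}(q^{2})$ are not needed for Proposition~\ref{intrep4} itself---applying \eqref{inv} to $\theta_{2}^{8}(q^{1/2})-8\theta_{2}^{8}(q)$ produces $2\theta_{4}^{8}(q^{2})-\theta_{4}^{8}(q)$ directly---they enter only later, in passing from Proposition~\ref{intrep4} to Theorem~\ref{val4}.
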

\begin{proof}
We prove the formula for $L(f, 4)$ only. The formula for $L(g,4)$ is obtained by similar computations. 

By \eqref{mellin}, we have
\begin{align*}
L(f, 4) = \frac{1}{6}\int_{0}^{1} \frac{1}{16}\theta_{2}^{4}(q)\theta_{4}^{2}(q) (\log q)^{3} \frac{dq}{q} 
=\frac{\pi^{4}}{96}\int_{0}^{\infty} \theta_{2}^{4}(e^{-\pi u})\theta_{4}^{2}(e^{-\pi u}) u^{3} du .
\end{align*}
By \eqref{inv}, \eqref{lam1}, \eqref{lam2} and 
the variable transformation $u \mapsto (k-1/2)u/(2r-1)$, we have 
\begin{align*}
L(f,4) =\frac{\pi^{4}}{12}\int_{0}^{\infty}
\left( \sum_{s,k=1}^{\infty} (2k-1)^{3}e^{\frac{-\pi u (2s-1)(2k-1)}{2}} \right) 
\left( \sum_{n,r=1}^{\infty} \frac{\chi_{-4}(n)}{(2r-1)^{2}} e^{\frac{-\pi n(2r-1)}{u}} \right) u^{2}du .
\end{align*}

The first series in the integral is a theta product. 

\begin{lemm}
\begin{align*}
\sum_{s,k=1}^{\infty} (2k-1)^{3}q^{(2s-1)(2k-1)}
= \frac{\theta_{2}^{8}(q^{1/2}) - 8\theta_{2}^{8}(q) }{256}.
\end{align*}
\end{lemm}
\begin{proof}
We have
\begin{align*}
\sum_{s,k=1}^{\infty} (2k-1)^{3}q^{(2s-1)(2k-1)}
&=\sum_{s,k=1}^{\infty} (k^{3}q^{sk} -9k^{3}q^{2sk} +8k^{3}q^{4sk}) \\
&= \frac{M(q)-9M(q^{2}) + 8M(q^{4}) }{240}, 
\end{align*}
where 
\begin{align*}
M(q) := 1+ 240\sum_{s,k=1}^{\infty}k^{3}q^{sk} .
\end{align*}
We know that $\theta_{2}(q)$ has the connection with $M(q)$ \cite[p.207, Theorem 3.39 (3.101)]{cooper}
\begin{align*}
\theta_{2}^{8}(q^{\frac{1}{2}}) = \frac{16}{15} (M(q) - M(q^{2})). 
\end{align*}
By this identity, we obtain the lemma. 

\end{proof}

If we use the lemma above and \eqref{inv}, we have 
\begin{align*}
L(f, 4) &= \frac{\pi^{4}}{3072} \int_{0}^{\infty}
(\theta_{2}^{8}(e^{-\pi u / 4}) - 8\theta_{2}^{8}(e^{-\pi u /2}))
\left( \sum_{n,r=1}^{\infty} \frac{\chi_{-4}(n)}{(2r-1)^{2}} e^{\frac{-\pi n(2r-1)}{u}} \right) u^{2}du \\
&=\frac{\pi^{4}}{24} \int_{0}^{\infty}
(2\theta_{4}^{8}(e^{-4 \pi / u }) - \theta_{4}^{8}(e^{- 2\pi / u }))
\left( \sum_{n,r=1}^{\infty} \frac{\chi_{-4}(n)}{(2r-1)^{2}} e^{\frac{-\pi n(2r-1)}{u}} \right) \frac{du}{u^{2}}.  
\end{align*}
By changing the variables $u \mapsto 1/u$ and $q = e^{-2\pi u}$, we obtain the proposition.

\end{proof}

We remark that the series in the integrals in Proposition \ref{intrep4} can be expressed in terms of 
generalized hypergeometric functions  \cite[(2.2)]{duke} 
\begin{align}
\sum_{n,r=1}^{\infty} \frac{\chi_{-4}(n)}{(2r-1)^{2}} q^{n(r-1/2)}
= \frac{\alpha^{1/2}}{4}
\frac{
{_{3}F_{2}}\left[ \left. 
\begin{matrix}
1,1,1 \\
\frac{3}{2}, \frac{3}{2}
\end{matrix}
\right| \alpha
\right]
}{{_{2}F_{1}}\left[ \left. 
\begin{matrix}
\frac{1}{2}, \frac{1}{2} \\
1
\end{matrix}
\right| \alpha
\right]} . \label{ram}
\end{align}

\begin{proof}[Proof of Theorem \ref{val4}]
By \eqref{trans} and \eqref{ram}, we obtain 
\begin{align*}
L(f,4) 
&= \frac{\pi^{3}}{192}
\int_{0}^{1} (\alpha^{1/2} + \alpha^{3/2}) (1-\alpha)
{_{3}F_{2}}\left[ \left. 
\begin{matrix}
1,1,1 \\
\frac{3}{2}, \frac{3}{2}
\end{matrix}
\right| \alpha
\right]
{_{2}F_{1}}\left[ \left. 
\begin{matrix}
\frac{1}{2}, \frac{1}{2} \\
1
\end{matrix}
\right| \alpha
\right] 
\frac{d\alpha}{\alpha (1-\alpha)} , \\
L(g,4) 
&=  \frac{\pi^{3}}{384}
\int_{0}^{1} ((1-\alpha)^{1/2} + (1-\alpha)^{3/2}) \alpha^{1/2}
{_{3}F_{2}}\left[ \left. 
\begin{matrix}
1,1,1 \\
\frac{3}{2}, \frac{3}{2}
\end{matrix}
\right| \alpha
\right]
{_{2}F_{1}}\left[ \left. 
\begin{matrix}
\frac{1}{2}, \frac{1}{2} \\
1
\end{matrix}
\right| \alpha
\right] 
\frac{d\alpha}{\alpha (1-\alpha)} .
\end{align*}
Here we used
\begin{align*}
2\theta_{4}^{8}(q^{2}) - \theta_{4}^{8}(q) &= (1+\alpha)(1-\alpha)\theta_{3}^{8}(q), \\
2\theta_{4}^{8}(q^{4}) - \theta_{4}^{8}(q^{2}) &= \frac{1}{2} ((1-\alpha)^{1/2} + (1-\alpha)^{3/2})\theta_{3}^{8}(q),
\end{align*}
which follow from the formulas \cite[p. 34, (2.1.7i), (2.1.7ii)]{borweins}
\begin{align*}
2\theta_{3}^{2}(q^{2}) = \theta_{3}^{2}(q) + \theta_{4}^{2}(q), \hspace{3mm} 
\theta_{3}(q)\theta_{4}(q)=\theta_{4}^{2}(q^{2}). 
\end{align*}
Then the hypergeometric expressions can be proved by interchanging of the order of summation and integration.

\end{proof}

\section*{Acknowledgment}
The author expresses his gratitude to Noriyuki Otsubo for a lot of helpful comments on a draft version of this paper.

\end{document}